\def\E{{\mathbb E}}
\newcommand{\ncom}{\newcommand}
\ncom{\ul}{\underline}
\ncom{\beq}{\begin{equation}}
\ncom{\eeq}{\end{equation}}
\ncom{\bea}{\begin{eqnarray*}}
\ncom{\eea}{\end{eqnarray*}}
\ncom{\beqa}{\begin{eqnarray}}
\ncom{\eeqa}{\end{eqnarray}}
\ncom{\nno}{\nonumber}
\ncom{\non}{\nonumber}
\ncom{\ds}{\displaystyle}
\ncom{\half}{\frac{1}{2}}
\ncom{\mbx}{\makebox{.25cm}}
\ncom{\hs}{\mbox{\hspace{.25cm}}}
\ncom{\rar}{\rightarrow}
\ncom{\Rar}{\Rightarrow}
\ncom{\noin}{\noindent}
\ncom{\bc}{\begin{center}}
\ncom{\ec}{\end{center}}
\ncom{\sz}{\scriptsize}
\ncom{\rf}{\ref}
\ncom{\s}{\sqrt{2}}
\ncom{\sgm}{\sigma}
\ncom{\Sgm}{\Sigma}
\ncom{\psgm}{\sigma^{\prime}}
\ncom{\dt}{\delta}
\ncom{\Dt}{\Delta}
\ncom{\lmd}{\lambda}
\ncom{\Lmd}{\Lambda}
\ncom{\Th}{\Theta}
\ncom{\e}{\eta}
\ncom{\eps}{\epsilon}
\ncom{\pcc}{\stackrel{P}{>}}
\ncom{\lp}{\stackrel{L_{p}}{>}}
\ncom{\dist}{{\rm\,dist}}
\ncom{\sspan}{{\rm\,span}}
\ncom{\re}{{\rm Re\,}}
\ncom{\im}{{\rm Im\,}}
\ncom{\sgn}{{\rm sgn\,}}
\ncom{\ba}{\begin{array}}
\ncom{\ea}{\end{array}}
\ncom{\hone}{\mbox{\hspace{1em}}}
\ncom{\htwo}{\mbox{\hspace{2em}}}
\ncom{\hthree}{\mbox{\hspace{3em}}}
\ncom{\hfour}{\mbox{\hspace{4em}}}
\ncom{\vone}{\vskip 2ex}
\ncom{\vtwo}{\vskip 4ex}
\ncom{\vonee}{\vskip 1.5ex}
\ncom{\vthree}{\vskip 6ex}
\ncom{\vfour}{\vspace*{8ex}}
\ncom{\norm}{\|\;\;\|}
\ncom{\integ}[4]{\int_{#1}^{#2}\,{#3}\,d{#4}}
\ncom{\vspan}[1]{{{\rm\,span}\{ #1 \}}}
\ncom{\dm}[1]{ {\displaystyle{#1} } }
\ncom{\ri}[1]{{#1} \index{#1}}
\newtheorem{theorem}{\bf Theorem}[section]
\newtheorem{remark}{\bf Remark}[section]
\newtheorem{proposition}{Proposition}[section]
\newtheoremstyle
    {remarkstyle}
    {}
    {11pt}
    {}
    {}
    {\bfseries}
    {:}
    {     }
    {\thmname{#1} \thmnumber{#2} }
\theoremstyle{remarkstyle}
\begin{document}

\newpage

\begin{center}
{\Large \bf Large deviations of time-averaged statistics for Gaussian processes}
\end{center}
\vone
\begin{center}
 {J. Gajda}$^{\textrm{a}}$, {A. Wy{\l}oma{\'n}ska}$^{\textrm{a}}$, {H. Kantz}$^{\textrm{b}}$, {A. V. Chechkin}$^{\textrm{c,d}}$ and 
{G. Sikora}$^{\textrm{a}}$
{\footnotesize{
		$$\begin{tabular}{l}
		\\
		$^{\textrm{a}}$ \emph{Faculty of Pure and Applied Mathematics, Hugo Steinhaus Center,}\\\emph{Wroc{\l}aw University of Science and  Technology,  Wroc{\l}aw, Poland}\\
				$^{\textrm{b}}$ \emph{Max Planck Institute for the Physics of Complex Systems, Dresden, Germany}\\
				$^{\textrm{c}}$ \emph{Institute for Physics \& Astronomy, University of Potsdam, Potsdam-Golm, Germany}\\
			$^{\textrm{d}}$ \emph{Akhiezer Institute for Theoretical Physics NSC "Kharkov Institute of Physics and Technology",}\\
						\emph{Kharkov, Ukraine}\\
		\end{tabular}$$} }
\end{center}

\vtwo
\begin{center}
\noindent{\bf Abstract}
\end{center}
In this paper we study the large deviations of time averaged mean square displacement (TAMSD) for Gaussian processes. The theory of large deviations is related to the exponential decay of probabilities of large fluctuations in random systems. From the mathematical point of view a given statistics satisfies the large deviation principle, if the probability that it belongs to a certain range decreases exponentially. The TAMSD is one of the main statistics used in the problem of anomalous diffusion detection.  Applying the theory of generalized chi-squared distribution and sub-gamma random variables we prove the upper bound for large deviations  of TAMSD for Gaussian processes. As a special case we consider fractional Brownian motion,  one of the most popular models of anomalous diffusion. Moreover, we derive the upper bound for large deviations of the estimator for the anomalous diffusion exponent.  \\
\vone \noindent{\it Key words:} Large deviation statistics; fractional Brownian motion; anomalous diffusion exponent; sub-gamma random variable.
\vtwo
\setcounter{equation}{0}

\section{Introduction}
 The theory of large deviations is concerned with the asymptotic behavior of large fluctuations of stochastic processes. The mathematical theory of large deviations was introduced by Cram\'er \cite{Cra38},   developed further in series of papers by Donsker and Varadhan \cite{Don1, Don2, Don3,Don4,var}, see also the monographs by Freidlin and Wentzell \cite{Fre84} and by Feng and Kurtz \cite{feng}, and references therein.

The theory of large deviations finds important applications in information theory \cite{dembo} and risk management \cite{novak}. However, the very first result of large deviations was obtained by 
Boltzmann more than one hundred years ago \cite{Ell9}. Indeed, prominent applications of large deviations theory arise in thermodynamics and statistical mechanics which deal with many particle systems \cite{touchette}. 

The intuitive definition of the large deviation principle could be given as follows:  let $A_N$ be a random variable indexed by the integer $N$ and let $P(A_N\in B$) be the probability that $A_N$ takes on a value in a set $B$. We say that $A_N$ satisfies a large deviation principle with the rate function $I_B$ if the following holds \cite{touchette}:
\begin{eqnarray}
 P(A_N\in B) \approx e^{-N I_B}.
\end{eqnarray}
The exact definition operates with supremum and infinum limits of the above probability and the rate function \cite{feng}. However, sometimes it is difficult or even impossible at all to find the explicit formula for the rate function or the large deviation principle. In this case one may still be able to find an upper bound  for the probability $P(A_N\in B)$, that is the function $I_B(N)$ which satisfies the following:
\begin{eqnarray}\label{eq12}
P(A_N\in B)\leq e^{-I_B(N)}.
\end{eqnarray}
This is exactly the case  we consider in this paper.

Recently, the large deviations for a variety of random variables have been intensively analyzed for different stochastic processes in the series of papers, see e.g. \cite{sp1,spa2,spa3,sp5,spa1}. In this paper we analyze large deviations of time averaged mean square displacement (TAMSD) for Gaussian processes. The TAMSD is the most common statistical tool characterizing anomalous diffusion or anomalous transport phenomena.  During last decades, anomalous diffusion have been identified in a great variety of complex physical, chemical and biological systems, see e.g. \cite{bouch, sok2, an1, metz1} and references therein.
The TAMSD at the lag time $\tau$  for a random vector $(X(1),X(2),...X(N))$ of length $N$ is defined as follows \cite{an1,an10}:
\begin{equation}
	\label{eq:msd}
	M_N(\tau)=\frac{1}{N-\tau}\sum_{j=1}^{N-\tau}(X(j+\tau)-X(j))^2.
\end{equation}
One of the main properties of TAMSD is its scaling. More precisely, this statistics behaves like  power function $\tau^\beta$, where $\tau$ is the lag time. For normal diffusion or ordinary Brownian motion (BM) the scaling is linear, i.e. $\beta=1$, and anomalous diffusion is characterized by $\beta\neq 1$. When $\beta>1$ then the process under consideration is superdiffusive while for $\beta<1$ it is called subdiffusive.  

One of the most generic processes, which exhibits anomalous diffusion behavior is the fractional Brownian motion (FBM) \cite{fbm,fbm1}. The FBM $\{B_H(t)\}$ is defined as zero-mean Gaussian process whose autocovariance takes the form:
\begin{equation}\label{cov}
\E(B_H(t)B_H(s))=D(|t|^{2H}+|s|^{2H}-|t-s|^{2H}).
\end{equation}
The parameter $H\in (0,1)$ is called the Hurst index. The FBM can exhibit sub- and super diffusion for $H<1/2$ and $H>1/2$, respectively. The newest results of the large deviation theory  concern different statistics of fractional Brownian motion \cite{Has13,Mar13,Mee08,Mil06,Wan18}. In this paper as a particular example we analyze large deviations of the TAMSD for FBM.

The rest of the paper is organized as follows. In Section  2 we remind the basic probability properties of TAMSD for Gaussian processes.  In Section 3 we present our main results about large deviations of TAMSD for Gaussian processes. Section 4 is devoted to a particular case, namely FBM.  In Section 5 we extend results on TAMSD and establish the large deviation principle for anomalous diffusion exponent estimator.

\setcounter{equation}{0}

\section{TAMSD for Gaussian processes}\label{section2}

In this section for the reader convenience we remind the probability properties of TAMSD for Gaussian processes, which were obtained recently \cite{greb11_1,greb11,bib:Sikora2017}. This will be the starting point for the main results of the paper, namely large deviation principle for TAMSD for Gaussian processes. 
 
For any centered non-degenerate Gaussian vector $\mathbb{X}=(X(1),X(2),\ldots,X(N))$ with a covariance matrix $\Sigma$ the quadratic form $\mathbb{X}\mathbb{X}^T$ has a following representation \cite{dav80,MatPro92}:
\begin{equation}\label{gchi}
\mathbb{X}\mathbb{X}^T=\sum_{j=1}^NX^2(j)\stackrel{d}{=}\sum_{j=1}^N\lambda_jU_j,
\end{equation}
where $\mathbb{X}^T$ is a transpose of vector $\mathbb{X}$ (so a column vector), $U_j$'s are independent identically distributed (IID) $\chi^2$ with $1$ degree of freedom 
random variables, and weights $\lambda_j$ are the eigenvalues of
the $N\times N$ positive-definite covariance matrix $\Sigma.$ The distribution in representation \eqref{gchi} is called a generalized chi-squared distribution.
Therefore for any centered non-degenerate Gaussian process, the TAMSD (\ref{eq:msd}) has
generalized chi-squared distribution
\cite{greb11}:
\begin{equation}\label{eq1}
(N-\tau)M_N(\tau)=\mathbb{Y}\mathbb{Y}^T=\sum_{j=1}^{N-\tau}\left(X(j+\tau)-X(j)\right)^2\stackrel{d}{=}\sum_{j=1}^{N-\tau}\lambda_j(\tau)U_j,
\end{equation}
where $U_j$'s are IID $\chi^2$ with $1$ degree of freedom 
random variables, and weights $\lambda_j(\tau)$ are the eigenvalues of
the $(N-\tau)\times(N-\tau)$ positive-definite covariance matrix ${\Sigma}(\tau)$ for
the vector of increments $\mathbb{Y}=(X(1+\tau)-X(1),X(2+\tau)-X(2),\ldots,X(N)-X(N-\tau))$. The covariance matrix takes the form: 
\begin{displaymath}
\Sigma(\tau) =
\begin{bmatrix}
  \sigma_{\tau}(0) & \ \ \ \sigma_{\tau}(1)\ \ \ &\ \ \ \sigma_{\tau}(2) \ \ \ & \ \ \ldots\ \  &\ \ \ \ldots \ \ \ &\sigma_{\tau}(N-\tau-1)  \\
  \sigma_{\tau}(1) & \sigma_{\tau}(0)  & \sigma_{\tau}(1) &  \ddots   &  &  \vdots \\
  \sigma_{\tau}(2)     & \sigma_{\tau}(1) & \ddots  & \ddots & \ddots& \vdots \\ 
 \vdots &  \ddots & \ddots &   \ddots  & \sigma_{\tau}(1) & \sigma_{\tau}(2) \\
 \vdots &         & \ddots & \sigma_{\tau}(1) & \sigma_{\tau}(0)&  \sigma_{\tau}(1) \\
\sigma_{\tau}(N-\tau-1)  & \ldots & \ldots & \sigma_{\tau}(2)  & \sigma_{\tau}(1) & \sigma_{\tau}(0)
\end{bmatrix}.
\end{displaymath}
All eigenvalues of matrix ${\Sigma}(\tau)$ are positive, $\lambda_j(\tau)>0,$ $j=1,2,\ldots,N-\tau.$
%
The distribution of the quadratic form $(N-\tau)M_N(\tau)$ in
Eq. (\ref{eq1}) can be represented as a sum of independent gamma
distributions with constant shape parameter $1/2$ and different scale
parameters, because
$\lambda_j(\tau)U_j\stackrel{d}{=}G(1/2,2\lambda_j(\tau))$ \cite{mos85}, where $G(k,\theta)$ is a gamma distributed random variable with parameters $k$ and  $\theta$.  We remind
that the probability density function (PDF) of $G(k,\theta)$ reads:
\begin{displaymath}
f_{(k,\theta)}(x)=\frac{x^{k-1} \,  \exp(-x/\theta)}{\Gamma(k) \,\theta^k} \quad (x>0),
\end{displaymath}
and the cumulative distribution function (CDF) of  $G(k,\theta)$ has the form:
$$F_{(k,\theta)}(x)=\frac{1}{\Gamma(k)}\gamma(k,x/\theta),$$
where $\gamma(k,x/\theta)$ is the incomplete lower gamma function of the general form:
$$\gamma(s,x)=\int_{0}^{x}{t^{s-1}e^{-t}dt.}$$
Therefore, the characteristic function of $(N-\tau)M_N(\tau)$ is the
product of characteristic functions of gamma distributions \cite{greb11_1,greb11}:
\begin{displaymath}
\phi_{(N-\tau)M_N(\tau)}(k)=\prod_{j=1}^{N-\tau}\frac{1}{\left[1-2\lambda_j(\tau)ik\right]^{1/2}}.
\end{displaymath}
Based on the result of \cite{mos85} the moment generating function of $(N-\tau)M_N(\tau)$ can be represented as \cite{bib:Sikora2017}:
\begin{equation*}
\begin{split}
{\rm MGF}_{(N-\tau)M_N(\tau)}(s) &= C \bigl(1-2\lambda_1(\tau)s \bigr)^{-(N-\tau)/2} \exp\left(\sum_{k=1}^{\infty} \frac{\gamma_k}{\bigl(1-2\lambda_1(\tau)s \bigr)^k}\right), \\
\end{split}
\end{equation*}
where $\lambda_1(\tau)$ is the smallest eigenvalue of the matrix
$\Sigma(\tau)$,
%
\begin{equation*}
\gamma_k=\sum_{j=1}^{N-\tau}\frac{(1-\lambda_1(\tau)/\lambda_{j}(\tau))^k}{2k}
\end{equation*}
and
\begin{equation*}
C=\prod_{j=1}^{N-\tau}\left(\frac{\lambda_1(\tau)}{\lambda_j(\tau)}\right)^{1/2}.
\end{equation*}
The PDF of $M_N(\tau)$ can be represented as a series of densities of gamma distributed variables $G((N-\tau)/2+k,2\lambda_1(\tau)/(N-\tau))$ \cite{bib:Sikora2017}:
\begin{eqnarray}\label{pdf}
g_{\tau}(x)=C\sum_{k=0}^{\infty}\delta_{k}f_{\left(\frac{N-\tau}{2}+k,\frac{2\lambda_1(\tau)}{(N-\tau)}\right)}(x),\ (x>0),
\end{eqnarray}
where $\delta_k$ can be calculated by the recursive formula:
\begin{equation*}
\delta_{k+1}=\frac{1}{k+1}\sum_{j=1}^{k+1} j  \gamma_j \delta_{k+1-j}, \quad \delta_0=1.
\end{equation*}
\noindent Since it is straightforward to see that $C\sum_{k=0}^{\infty}{\delta_{k}}=1$ and $C\delta_{k}>0$, the distribution of $M_N(\tau)$ with the PDF (\ref{pdf}) can be understood as a countable infinite mixture of gamma distributed random variables $G((N-\tau)/2+k,2\lambda_1(\tau)/(N-\tau))$.
The sequence in the above formula converges uniformly, see
\cite{mos85}. Therefore, justified term-by-term integration leads to the formula for the cumulative distribution function:
\begin{eqnarray}\label{eq11}
G_{\tau}(w)&=&P\left(M_{N}(\tau)\leq w\right)\nonumber\\
&=&C\sum_{k=0}^{\infty}\delta_k\int_{0}^{w}{f_{\left(\frac{N-\tau}{2}+k,\frac{2\lambda_1(\tau)}{(N-\tau)}\right)}(x)}dx\nonumber\\
&=&C\sum_{k=0}^{\infty}{\delta_kF_{\left(\frac{N-\tau}{2}+k,\frac{2\lambda_1(\tau)}{(N-\tau)}\right)}(w).}
\end{eqnarray}
From Eq. (\ref{eq11}) one can calculate also the formula for the complementary CDF (called also tail distribution) of $M_N(\tau)$:
\begin{eqnarray}
P\left(M_{N}(\tau)> w\right)&=&1-G_{\tau}(w)\nonumber\\
&=&1-C\sum_{k=0}^{\infty}{\delta_kF_{\left(\frac{N-\tau}{2}+k,\frac{2\lambda_1(\tau)}{(N-\tau)}\right)}(w).}
\end{eqnarray}
Therefore we obtain:
\begin{eqnarray*}
P\left(M_{N}(\tau)> w\right)=
C\sum_{k=0}^{\infty}\delta_k\Gamma\left(\frac{N-\tau}{2}+k,\frac{w(N-\tau)}{2\lambda_1(\tau)}\right),
\end{eqnarray*}
where $\Gamma(s,x)$ is incomplete upper  gamma function defined as:
$$\Gamma(s,x)=\int_{x}^{\infty}{t^{s-1}e^{-t}dt.}$$

\section{Large deviations of TAMSD for Gaussian processes}

Let us consider the following probability function:
\begin{eqnarray}\label{f1}
T_N(\epsilon,\tau)=P\left(\left|M_N(\tau)-\E(M_N(\tau))\right|>\epsilon\right).
\end{eqnarray}
In this section we check if the function (\ref{f1}) fulfills the large deviation principle in the sense of Eq. (\ref{eq12}), i.e. if there exists positive function $I(\epsilon,\tau,N)$ such that the following holds:
\begin{eqnarray}\label{f3}
P\left(\left|M_N(\tau)-\E(M_N(\tau))\right|>\epsilon\right)\leq e^{-I(\epsilon,\tau,N)}.
\end{eqnarray}
At first we observe that the random variable $(N-\tau)M_N(\tau)$ can be represented as:
\begin{eqnarray}\label{f11}
(N-\tau)M_N(\tau)=\sum_{j=1}^{N-\tau}G_j,
\end{eqnarray}
where $\{G_j\}$ are independent gamma random variables with parameters $(1/2,2\lambda_j(\tau))$, see section \ref{section2}. Now, let us remind the property of gamma distributed variables which will be of importance in what follows. Namely, the centered version of gamma distributed random variable $G$ with parameter $k$ and $\theta$: 
\begin{eqnarray}\label{G}\tilde{G} = G-\E\left( G \right)\end{eqnarray} is a sub-gamma variable with parameters $\nu=k\theta^2$ (variance factor) and $c=\theta$ (scale factor) \cite{bib:Boucheron}. 
 A random variable $X$ is called sub-gamma on the right tail with parameters $\nu$ and $c$, if the logarithm of its moment generating function satisfies \cite{bib:Boucheron}:
\begin{equation}\label{f2}
\log\left( \E\left( e^{\gamma X} \right)\right)  = \phi_X(\gamma)\leq \frac{\gamma^2 \nu}{2(1-c\gamma)}
\end{equation} 
for every $\gamma \in (0,1/c)$. Similarly, $X$ is sub-gamma on the left tail when $-X$ is sub-gamma on the right tail. For every sub-gamma variable with $\nu$ and $c$ parameters we have the following inequality (Chernoff's inequality):
\begin{align}\label{f33}
P\left( X>\epsilon \right) &\leq \exp\left( -\frac{\nu}{c^2} \mathcal{H}\left(  \frac{c\epsilon}{\nu}  \right)\right), 
\end{align}where
\begin{eqnarray}\label{H}\mathcal{H}(u)=1+u-\sqrt{1+2u}
\end{eqnarray} for $u>0$.

\noindent Going back to Eq. (\ref{f11}) one can see that
the inequality (\ref{f33}) is satisfied for each 
\begin{eqnarray}\label{Gj}\tilde{G}_j=G_j-\E(G_j)\end{eqnarray} with $\nu = 2\lambda^2_j(\tau)$  and $c = 2\lambda_j(\tau)$,  thus we have \cite{bib:Boucheron}:
{
\begin{align}\label{f3}
P\left( \tilde{G}_j>\epsilon \right) &\leq ~exp\left( -\frac{1}{2}\mathcal{H}\left( \frac{\epsilon}{\lambda^2_j(\tau)} \right)  \right).
\end{align}

\noindent In the following Proposition we show that also $(N-\tau)M_N(\tau)-\E\left( (N-\tau)M_N(\tau) \right)$ is a sub-gamma
random variable if the considered random vector $(X(1),X(2),..,X(N))$ is Gaussian.
\begin{proposition}
	\label{prop:1}
	We consider the vector  $(X(1),X(2),..,X(N))$ of centered non-degenerate Gaussian distributed random variables and its TAMSD, $M_N(\tau)$, given in Eq. (\ref{eq:msd}). Then for each $\tau$ the random variable: $$(N-\tau)M_N(\tau)-\E\left( (N-\tau)M_N(\tau) \right)$$ is a sub-gamma on the right tail.
\end{proposition}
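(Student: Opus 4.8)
The plan is to combine the representation \eqref{f11} of $(N-\tau)M_N(\tau)$ as a sum of independent gamma variables, the fact recalled just above that each centered summand $\tilde G_j$ from \eqref{Gj} is sub-gamma on the right tail with variance factor $\nu_j=2\lambda_j^2(\tau)$ and scale factor $c_j=2\lambda_j(\tau)$, and the stability of the sub-gamma class under sums of independent random variables. Concretely, since $\E\big((N-\tau)M_N(\tau)\big)=\sum_{j}\E(G_j)$, the centered quadratic form equals $\sum_{j=1}^{N-\tau}\tilde G_j$ with the $\tilde G_j$ independent, so its cumulant generating function factorizes:
\[
\phi_{(N-\tau)M_N(\tau)-\E((N-\tau)M_N(\tau))}(\gamma)=\sum_{j=1}^{N-\tau}\phi_{\tilde G_j}(\gamma).
\]

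Next I would fix a single scale factor dominating all of the $c_j$, namely $c:=2\max_{1\le j\le N-\tau}\lambda_j(\tau)$; this is finite and strictly positive because non-degeneracy of the Gaussian vector forces $\Sigma(\tau)$ to be positive definite, hence all $\lambda_j(\tau)>0$ and the maximum is attained. For every $\gamma\in(0,1/c)$ one has $\gamma\in(0,1/c_j)$ for each $j$, so the sub-gamma bound \eqref{f2} applies to each term, and, using $0<1-c\gamma\le 1-c_j\gamma$,
\[
\phi_{\tilde G_j}(\gamma)\ \le\ \frac{\gamma^2\nu_j}{2(1-c_j\gamma)}\ \le\ \frac{\gamma^2\nu_j}{2(1-c\gamma)}.
\]
Summing over $j$ and setting $\nu:=\sum_{j=1}^{N-\tau}\nu_j=2\sum_{j=1}^{N-\tau}\lambda_j^2(\tau)=2\,\mathrm{tr}\!\big(\Sigma(\tau)^2\big)$ gives
\[
\phi_{(N-\tau)M_N(\tau)-\E((N-\tau)M_N(\tau))}(\gamma)\ \le\ \frac{\gamma^2}{2(1-c\gamma)}\sum_{j=1}^{N-\tau}\nu_j\ =\ \frac{\gamma^2\nu}{2(1-c\gamma)}
\]
for all $\gamma\in(0,1/c)$, which is precisely the defining inequality \eqref{f2}. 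Hence $(N-\tau)M_N(\tau)-\E\big((N-\tau)M_N(\tau)\big)$ is sub-gamma on the right tail, with variance factor $\nu=2\,\mathrm{tr}(\Sigma(\tau)^2)$ and scale factor $c=2\max_{1\le j\le N-\tau}\lambda_j(\tau)$.

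I do not expect a genuine obstacle here: the only points requiring care are (i) choosing the common scale factor $c$ so that the per-term bounds \eqref{f2} are simultaneously valid on one interval $(0,1/c)$, and (ii) invoking non-degeneracy to guarantee that the $\lambda_j(\tau)$ are positive, so that $\nu$ and $c$ are well-defined and finite. One could equally cite the general additivity of sub-gamma parameters under independent summation from \cite{bib:Boucheron}; writing out the elementary cumulant computation above makes the statement self-contained and exhibits the explicit parameters $\nu$ and $c$, which will be convenient for deriving the subsequent large-deviation upper bound via \eqref{f33}.
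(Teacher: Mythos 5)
Your proof is correct and follows essentially the same route as the paper: decompose the centered quadratic form into independent centered gamma summands, bound each cumulant generating function, and uniformize the scale factor to $\bar{\lambda}(\tau)=2\max_j\lambda_j(\tau)$, arriving at the same parameters $\nu=2\sum_j\lambda_j^2(\tau)$ and $c=\bar{\lambda}(\tau)$. The only cosmetic difference is that you invoke the per-term sub-gamma property of a centered gamma variable (which the paper itself recalls just before the proposition), whereas the paper's proof re-derives it explicitly via the elementary inequality $-\log(1-u)-u\leq u^2/(2(1-u))$.
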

\begin{proof} 
Let us observe that for fixed $\tau$ we have:
\begin{align*}
(N-\tau)M_N(\tau)-\E\left( (N-\tau)M_N(\tau) \right)=\sum\limits_{j=1}^{N-\tau}\tilde{G}_j,
\end{align*}
where $\tilde{G}_j$ is given in (\ref{Gj}). Moreover, $\{\tilde{G}_j\}$ are independent. Thus we obtain:
	\begin{align*}
	\log\left( \E\left( e^{\gamma \left((N-\tau)M_N(\tau)-\E\left( (N-\tau)M_N(\tau) \right)\right) }\right)\right)&=\log\left( \E\left( e^{\gamma \sum\limits_{j=1}^{N-\tau}\tilde{G}_j  }\right)\right) \\
	=& \log\left(\prod\limits_{j=1}^{N-\tau} \exp\left( -\gamma \lambda_j(\tau) -\frac{1}{2}\log\left( 1-2\gamma\lambda_j(\tau) \right) \right) \right)&\\
	&\leq  \log\left(\prod\limits_{j=1}^{N-\tau}  \exp\left( \frac{ 2 \gamma^2 \lambda^2_j(\tau) }{2\left( 1-2\gamma\lambda_j(\tau) \right)} \right)  \right)\\
	& \leq  \log\left(\prod\limits_{j=1}^{N-\tau}  \exp\left( \frac{2\gamma^2\lambda^2_j(\tau)}{2\left(  1-\gamma\bar{\lambda}(\tau) \right)}  \right)  \right)  \\
	& \leq   \frac{\gamma^2 \sum\limits_{j=1}^{N-\tau } 2\lambda^2_j(\tau)}{2(1-\gamma \bar{\lambda}(\tau))}.
	\end{align*}
	The first inequality is a consequence of the following property, namely for $u\in (0,1)$ we have:
	\begin{equation*}
	-\log(1-u) -u\leq \frac{u^2}{2(1-u)}. 
	\end{equation*}
	The above results show that  $(N-\tau)M_N(\tau)-\E\left( (N-\tau)M_N(\tau) \right)$ is sub-gamma random variable with 
	$\nu = 2\sum\limits_{j=1}^{N-\tau}\lambda^2_j(\tau)$  and $c = \bar{\lambda}(\tau) = 2 max_{j}\left\{  \lambda_j(\tau) \right\}$. It is worth mentioning that $\bar{\lambda}(\tau)$ may depend on $N$.
\end{proof}
\noindent Applying Proposition \ref{prop:1} one can provide the upper bound  for $T_N(\epsilon,\tau)$ defined in Eq. (\ref{f1}) for  Gaussian vector  $(X(1),X(2),...,X(N))$. 
\begin{theorem}
	\label{thm:2}
We consider the vector  $(X(1),X(2),..,X(N))$ of centered non-degenerate Gaussian distributed random variables and $M_N(\tau)$ is the TAMSD calculated for this vector according to formula (\ref{eq:msd}). For each $\tau$ the following inequality holds:
	\begin{align}
	\label{UpperBoundLD}
	P&\left(\left|M_N(\tau)-\E(M_N(\tau))\right|>\epsilon\right)
	&\leq 2 \exp\left(- \frac{2\sum\limits_{j=1}^{N-\tau}\lambda^2_j(\tau)}{\bar{\lambda}(\tau)^2}   \mathcal{H}\left( \frac{\bar{\lambda}(\tau) \epsilon (N-\tau)   }{2\sum\limits_{j=1}^{N-\tau}\lambda^2_j(\tau)}\right) \right),
	\end{align}
where $\mathcal{H}(\cdot)$ is given in (\ref{H}) and $\bar{\lambda}(\tau) = 2 max_{j}\left\{  \lambda_j(\tau) \right\}$.
\end{theorem}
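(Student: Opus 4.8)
The plan is to reduce the two-sided deviation bound to the one-sided sub-gamma control already established in Proposition \ref{prop:1}, applied separately to the upper and lower tails of the quadratic form $(N-\tau)M_N(\tau)$, and then to plug the explicit parameters into the Chernoff inequality \eqref{f33}.

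First I would rescale: since $N-\tau>0$, the event $\{|M_N(\tau)-\E(M_N(\tau))|>\epsilon\}$ is identical to $\{|S|>(N-\tau)\epsilon\}$, where $S:=(N-\tau)M_N(\tau)-\E((N-\tau)M_N(\tau))=\sum_{j=1}^{N-\tau}\tilde G_j$ as in \eqref{f11}--\eqref{Gj}. A union bound then gives
\begin{align*}
T_N(\epsilon,\tau)\leq P\big(S>(N-\tau)\epsilon\big)+P\big(-S>(N-\tau)\epsilon\big).
\end{align*}
For the first term, Proposition \ref{prop:1} tells us that $S$ is sub-gamma on the right tail with variance factor $\nu=2\sum_{j=1}^{N-\tau}\lambda_j^2(\tau)$ and scale factor $c=\bar\lambda(\tau)$, so \eqref{f33} with $X=S$ and threshold $(N-\tau)\epsilon$ produces exactly one copy of the exponential in \eqref{UpperBoundLD}.

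For the second term I would verify the mirror statement, namely that $-S$ is also sub-gamma on the right tail with the same pair $(\nu,c)$. This is the computation in the proof of Proposition \ref{prop:1} carried out with $-\gamma$ in place of $\gamma$: using $\log\E e^{-\gamma\tilde G_j}=\gamma\lambda_j(\tau)-\tfrac12\log(1+2\gamma\lambda_j(\tau))$ together with the elementary estimate $u-\log(1+u)\leq\tfrac{u^2}{2}\leq\tfrac{u^2}{2(1-c\gamma)}$ for $u=2\gamma\lambda_j(\tau)>0$ and $\gamma\in(0,1/c)$, and summing over the independent $\tilde G_j$, one gets $\phi_{-S}(\gamma)\leq\tfrac{\gamma^2\nu}{2(1-c\gamma)}$. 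Applying \eqref{f33} to $-S$ yields a second identical exponential, and adding the two contributions accounts for the prefactor $2$ in \eqref{UpperBoundLD}. It then remains only to substitute $\nu=2\sum_{j=1}^{N-\tau}\lambda_j^2(\tau)$, $c=\bar\lambda(\tau)$ and threshold $(N-\tau)\epsilon$ into $\exp\!\big(-\tfrac{\nu}{c^2}\mathcal H(\tfrac{c\epsilon(N-\tau)}{\nu})\big)$, which reproduces the claimed expression; note that the argument of $\mathcal H$ is positive because $\epsilon>0$, so \eqref{f33} is legitimately invoked.

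The only step that is not pure bookkeeping is establishing the left-tail sub-gamma property of $S$ (equivalently, that $-S$ satisfies \eqref{f2}), and I expect this to be the main—though modest—obstacle: it is not literally covered by Proposition \ref{prop:1}, but it follows from the same one-line convexity inequality used there, so no real difficulty is anticipated. Everything else is the equivalence of events under rescaling, a union bound, and a direct substitution into \eqref{f33}.
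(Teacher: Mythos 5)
Your proposal is correct and follows essentially the same route as the paper: reduce to the sub-gamma property of $S=(N-\tau)M_N(\tau)-\E((N-\tau)M_N(\tau))$ from Proposition \ref{prop:1}, apply the Chernoff bound \eqref{f33} to each tail, and combine via a union bound to get the factor $2$. The one genuine addition is your explicit verification that $-S$ is sub-gamma on the right tail via $u-\log(1+u)\leq u^2/2$; the paper merely asserts the left-tail property and delegates it to Theorem 2.3 of Boucheron et al., so your argument is the more self-contained (and your inequality is easily checked, since $\frac{d}{du}\bigl(u^2/2-u+\log(1+u)\bigr)=u^2/(1+u)\geq 0$).
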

\begin{proof}
	The proof of this Proposition follows directly from Proposition \ref{prop:1}.
	Since the random variable $(N-\tau)M_N(\tau)-\E\left( (N-\tau)M_N(\tau) \right)$ is a sub-gamma on the right tail, then  $-((N-\tau)M_N(\tau)-\E\left( (N-\tau)M_N(\tau) \right))$ is also sub-gamma on the left tail. Now, from Theorem 2.3 in \cite{bib:Boucheron} we get the inequality (\ref{UpperBoundLD}) which is a first main result of the present paper.
\end{proof}
}
\begin{remark}\label{rem1}
Form Perron-Frobenius theorem \cite{Mey00} it is known that the maximum eigenvalue $\max_{j}{\lambda_j(\tau)}$ of the matrix $\Sigma(\tau)$ exists and satisfies the following inequalities:
$$\textrm{minimum raw sum}\leq max_{j}{\lambda_j(\tau)}\leq\textrm{maximum raw sum}.$$
For matrix $\Sigma(\tau),$ in the case of $N-\tau$ as an odd number, the minimum raw sum is the sum of elements of first raw (also $(N-\tau)$-th raw) and the maximum raw sum is the sum of elements of $((N-\tau-1)/2+1)$-th raw. Therefore we have the following:
\begin{equation}\label{eq1}
\sum_{j=0}^{N-\tau-1}\sigma_{\tau}(j)\leq max_{j}{\lambda_j(\tau)}\leq\sigma_{\tau}(0)+2\sum_{j=1}^{(N-\tau-1)/2}\sigma_{\tau}(j).
\end{equation}
For matrix $\Sigma(\tau),$ in the case of $N-\tau$ as an even number, the minimum raw sum is the sum of elements of first raw (also $(N-\tau)$-th raw) and the maximum raw sum is the sum of elements of $((N-\tau)/2)$-th raw (also $((N-\tau)/2+1)$-th raw). Therefore we have the following:
\begin{equation}\label{eq2}
\sum_{j=0}^{N-\tau-1}\sigma_{\tau}(j)\leq max_{j}{\lambda_j(\tau)}\leq\sigma_{\tau}(0)+2\sum_{j=1}^{(N-\tau)/2-1}\sigma_{\tau}(j).
\end{equation}

\end{remark}

\setcounter{equation}{0}
\section{Large deviation principle for TAMSD of fractional Brownian motion}\label{TAMSD_fbm}
In this section we  present upper bounds for the probability function  $T_N(\epsilon,\tau)$ given in (\ref{f1}),  for BM and FBM.
\begin{proposition}\label{tpro41}
Let $(B(1),B(2),...,B(N))$ be a random vector of BM and $M_N(\tau)$ - the corresponding TAMSD. In this case the following inequality holds:
\begin{eqnarray}\label{prop11}
	P\left(\left|M_N(\tau)-\E(M_N(\tau))\right|>\epsilon\right) 
	 \nonumber \\ \leq2 \exp\left(- \frac{4(N-\tau)D^2\tau(\tau+1)(2\tau+1)}{3\bar{\lambda}(\tau)^2}   \mathcal{H}\left( \frac{3\bar{\lambda}(\tau) \epsilon  }{4D^2\tau(\tau+1)(2\tau+1)}\right) \right)
\end{eqnarray}
where $\mathcal{H}(\cdot)$ is defined in (\ref{H}) and $\bar{\lambda}(\tau) = 2 max_{j}\left\{  \lambda_j(\tau) \right\}$.
\end{proposition}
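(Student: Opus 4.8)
This follows by applying Theorem \ref{thm:2} to the Gaussian vector $(B(1),\dots,B(N))$, which is centered and non-degenerate, so the only quantity that has to be supplied is the variance factor $\sum_{j=1}^{N-\tau}\lambda_j^2(\tau)$ for Brownian motion; the scale factor $\bar\lambda(\tau)=2\max_j\lambda_j(\tau)$ is left symbolic (it can be controlled, if one wishes, through Remark \ref{rem1}). The plan is therefore: (i) compute the increment autocovariances $\sigma_\tau(h)$; (ii) evaluate $\sum_j\lambda_j^2(\tau)=\mathrm{tr}\,\Sigma(\tau)^2$; (iii) insert the result into (\ref{UpperBoundLD}). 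For (i), setting $H=1/2$ in (\ref{cov}) gives $\E(B(t)B(s))=2D\min(|t|,|s|)$, so the increments $Y_j=B(j+\tau)-B(j)$ form a stationary sequence with $\sigma_\tau(h)=\mathrm{Cov}(Y_j,Y_{j+h})=2D(\tau-h)$ for $0\le h\le\tau$ and $\sigma_\tau(h)=0$ for $h\ge\tau$ — immediate from writing $Y_j$ as a sum of $\tau$ consecutive i.i.d. unit increments, so that two such windows are uncorrelated once disjoint; in particular $\sigma_\tau(0)=2D\tau$.

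For (ii), $\Sigma(\tau)$ is the symmetric Toeplitz matrix with entries $\sigma_\tau(|i-j|)$, hence $\sum_{j=1}^{N-\tau}\lambda_j^2(\tau)=\mathrm{tr}\,\Sigma(\tau)^2=\sum_{i,j=1}^{N-\tau}\sigma_\tau(|i-j|)^2$. Grouping the summands by the band index $h=|i-j|$ (only $h<\tau$ contributes) turns this into a finite arithmetic sum of $(\tau-h)^2$ weighted by the band multiplicities, and with the identity $\sum_{k=1}^{\tau}k^2=\tau(\tau+1)(2\tau+1)/6$ one arrives at $\sum_{j=1}^{N-\tau}\lambda_j^2(\tau)\le\tfrac{2}{3}D^2(N-\tau)\tau(\tau+1)(2\tau+1)$. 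As a consistency check, $\Sigma(\tau)$ has the Fej\'er-kernel symbol $2D(\sin(\tau\theta/2)/\sin(\theta/2))^2$, and $\tfrac{1}{2\pi}\int_{-\pi}^{\pi}$ of its square reproduces the same growth in $\tau$.

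For (iii), I would first record the elementary fact that $v\mapsto\tfrac{2v}{\bar\lambda^2}\,\mathcal{H}\!\big(\tfrac{\bar\lambda\epsilon(N-\tau)}{2v}\big)$ is nonincreasing for $v>0$: since $\mathcal{H}$ (see (\ref{H})) is convex with $\mathcal{H}(0)=\mathcal{H}'(0)=0$, the ratio $\mathcal{H}(u)/u$ is nondecreasing, and the displayed map equals $\tfrac{\epsilon(N-\tau)}{\bar\lambda}\cdot\tfrac{\mathcal{H}(u)}{u}$ with $u=\bar\lambda\epsilon(N-\tau)/(2v)$ decreasing in $v$. Consequently, replacing $\sum_j\lambda_j^2(\tau)$ in (\ref{UpperBoundLD}) by the larger explicit quantity from (ii) can only enlarge the right-hand side, and substituting $2\sum_j\lambda_j^2(\tau)=\tfrac{4}{3}D^2(N-\tau)\tau(\tau+1)(2\tau+1)$ into both the prefactor and the argument of $\mathcal{H}$ in (\ref{UpperBoundLD}) produces exactly (\ref{prop11}).

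I expect step (ii) to be the only real work. Step (i) is a one-line computation and the substitution in (iii) is mechanical once the monotonicity remark is in place; but in (ii) one must be careful to assign each off-diagonal band $h$ its correct edge-corrected multiplicity and to carry the constant of $\sum(\tau-h)^2$ through without slipping, since every constant appearing in (\ref{prop11}) is pinned down by that single number.
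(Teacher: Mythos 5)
Your overall route is the same as the paper's: reduce everything to $\sum_{j}\lambda_j^2(\tau)=\mathrm{tr}\,\Sigma(\tau)^2$, evaluate this for the Brownian increment covariance $\sigma_\tau(h)=2D(\tau-h)$ for $h\le\tau-1$ (and $0$ otherwise), and substitute into (\ref{UpperBoundLD}). Your step (i) is correct, and the monotonicity observation in step (iii) --- that the right-hand side of (\ref{UpperBoundLD}) is nondecreasing in $v=\sum_j\lambda_j^2(\tau)$ because $\mathcal{H}(u)/u$ is nondecreasing --- is correct and is exactly the tool one needs if the trace is only controlled from above. The genuine gap is in step (ii): the inequality $\sum_j\lambda_j^2(\tau)\le\tfrac{2}{3}D^2(N-\tau)\tau(\tau+1)(2\tau+1)$ is false in the regime of interest, and it is needed in precisely that direction for step (iii) to go through. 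Writing $n=N-\tau$, the correct band multiplicities give
\begin{equation*}
\mathrm{tr}\,\Sigma(\tau)^2=n\,\sigma_\tau(0)^2+2\sum_{h=1}^{\tau-1}(n-h)\,\sigma_\tau(h)^2,
\end{equation*}
i.e.\ band $h\ge1$ occurs $2(n-h)$ times, not $n$ times; for $n>2(\tau-1)$ this strictly exceeds $n\sum_{h=0}^{\tau-1}\sigma_\tau(h)^2=\tfrac{2}{3}D^2n\,\tau(\tau+1)(2\tau+1)$, and a correct upper bound is $4D^2n\bigl(\sigma_\tau(0)^2/(4D^2)+2\sum_{h=1}^{\tau-1}(\tau-h)^2\bigr)=\tfrac{4}{3}D^2n\,\tau(2\tau^2+1)$, roughly twice your constant. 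Concretely, for $n=3$, $\tau=2$ one gets $\mathrm{tr}\,\Sigma^2=3(4D)^2+4(2D)^2=64D^2$ while your claimed bound is $60D^2$. Your own Fej\'er/Parseval consistency check would have flagged this: $\tfrac{1}{2\pi}\int_{-\pi}^{\pi}\bigl(2D\sin^2(\tau\theta/2)/\sin^2(\theta/2)\bigr)^2d\theta=\sum_{h\in\mathbb{Z}}\sigma_\tau(|h|)^2=\tfrac{4}{3}D^2\tau(2\tau^2+1)\sim\tfrac{8}{3}D^2\tau^3$ per dimension, against the $\sim\tfrac{4}{3}D^2\tau^3$ implicit in your bound. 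Since the true $v$ is larger than the quantity you substitute, and the bound in (\ref{UpperBoundLD}) shrinks as $v$ shrinks, the substitution does not establish (\ref{prop11}).

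For context: the paper's own proof arrives at the same number $\tfrac{2}{3}D^2(N-\tau)\tau(\tau+1)(2\tau+1)$, but as an asserted equality, via the claim that every diagonal entry of $\Sigma^2(\tau)$ equals $\sum_{h=0}^{n-1}\sigma_\tau^2(h)$; for a banded Toeplitz matrix that holds only for the two corner rows, so the constant appearing in (\ref{prop11}) is itself tied to this miscount. This means you cannot repair your argument merely by tightening step (ii): using the correct trace (or any valid over-estimate such as $\tfrac{4}{3}D^2n\,\tau(2\tau^2+1)$) changes the explicit constants in both the prefactor and the argument of $\mathcal{H}$ in the final bound, so the statement you would actually prove differs from (\ref{prop11}) by essentially a factor of two inside the exponent.
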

\normalsize
\begin{proof}
It is enough to find the formula for the sum of squared eigenvalues $\sum\limits_{j=1}^{N-\tau}\lambda^2_j(\tau)$ and put it into inequality \eqref{UpperBoundLD}. We apply two well-known facts from linear algebra. The first fact states that the sum of all eigenvalues of some matrix is identified to the trace of that matrix. The second fact is that  the squared eigenvalues are the eigenvalues of the matrix to the second power. Therefore, we have for eigenvalues $\lambda_j(\tau)$ and the covariance matrix $\Sigma(\tau)$ \cite{gant}:
$$trace(\Sigma^2(\tau))=\sum\limits_{j=1}^{N-\tau}\lambda^2_j(\tau).$$ The matrix $\Sigma^2(\tau)$ has a main diagonal with the same element $\sum_{j=0}^{N-\tau-1}\sigma_{\tau}^2(j)$, where 
\begin{eqnarray}\label{sigma_mala}
\sigma_\tau(j)=\left\{ \begin{array}{ll}
2D(\tau-j) & \textrm{$j\leq\tau-1$}\\
0 & \textrm{$j>\tau-1$}\\
\end{array} \right..
\end{eqnarray}
Hence we have:
\begin{align}\label{pro41}\sum\limits_{j=1}^{N-\tau}\lambda^2_j(\tau)&=(N-\tau)\sum_{j=0}^{N-\tau-1}\sigma_{\tau}^2(j)\nonumber\\
&=(N-\tau)\sum_{j=0}^{\tau-1}[2D(\tau-j)]^2=(N-\tau)4D^2\left[\frac{\tau(\tau+1)(2\tau+1)}{6}\right],
\end{align}
where the last equality is based on the known formula: $$\sum_{k=1}^nk^2=n(n+1)(2n+1)/6.$$ 
Substituting \eqref{pro41} into inequality \eqref{UpperBoundLD} we arrive at (\ref{prop11}).

	\end{proof}
\begin{remark}
In order to obtain the asymptotics for $\bar{\lambda}(\tau)$ in Eq. (\ref{prop11}) we use the Remark \ref{rem1}. Indeed, taking under consideration Eq. (\ref{sigma_mala}) for BM we obtain:
$$\sum_{j=0}^{N-\tau-1}\sigma_{\tau}(j)=\sum_{j=0}^{\tau-1}2D(\tau-j)=D\tau(\tau+1),$$
where the last equality based on formula:
\begin{equation}\label{eq3}
\sum_{k=1}^nk=\frac{n(n+1)}{2}.
\end{equation}
Moreover we have:
$$\sigma_{\tau}(0)+2\sum_{j=1}^{(N-\tau-1)/2}\sigma_{\tau}(j)=\sigma_{\tau}(0)+2\sum_{j=1}^{(N-\tau)/2-1}\sigma_{\tau}(j)=2D\tau+2\sum_{j=1}^{\tau-1}2D(\tau-j)=2D\tau^2,$$
where the last equality is based on \eqref{eq3}.
Hence the inequalities \eqref{eq1} and \eqref{eq2} for BM reduce to the following:
$$D\tau(\tau+1)\leq max_j{\lambda_j(\tau)}\leq2D\tau^2.$$
\end{remark}
{
\begin{proposition}\label{tpropo5}
Let $(B_H(1),B_H(2),...,B_H(N))$ be a random vector of FBM and $M_N(\tau)$ - the corresponding TAMSD. In this case the following inequality holds:
\begin{eqnarray}\label{prop55}
	P\left(\left|M_N(\tau)-\E(M_N(\tau))\right|>\epsilon\right) 
	\leq 2 \exp\left(- \frac{2(N-\tau)D^2\alpha(\tau,H,N)}{\bar{\lambda}(\tau)^2}   \mathcal{H}\left( \frac{\bar{\lambda}(\tau) \epsilon }{2D^2\alpha(\tau,H,N)}\right) \right),
\end{eqnarray}
where $\mathcal{H}(\cdot)$ is defined in (\ref{H}), $\bar{\lambda}(\tau) = 2 max_{j}\left\{  \lambda_j(\tau) \right\}$, and
\begin{eqnarray}\label{alfa}
	\alpha(\tau,H,N)=\sum_{i=0}^{N-\tau-1}\left[(i+\tau)^{2H}-2i^{2H}+|i-\tau|^{2H}\right]^2.
	\end{eqnarray}
\end{proposition}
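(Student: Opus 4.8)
\emph{Proof proposal.} The plan is to follow the proof of Proposition~\ref{tpro41} almost verbatim, the only change being the process under consideration. The bound \eqref{UpperBoundLD} of Theorem~\ref{thm:2} is already stated for an arbitrary centered non-degenerate Gaussian vector, and the sole process-dependent quantity entering it is $S(\tau):=\sum_{j=1}^{N-\tau}\lambda_j^2(\tau)$. Hence it suffices to evaluate $S(\tau)$ for the FBM increment vector $\mathbb{Y}=(B_H(1+\tau)-B_H(1),\ldots,B_H(N)-B_H(N-\tau))$ and substitute the result into \eqref{UpperBoundLD}.

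To evaluate $S(\tau)$ I would use, exactly as in Proposition~\ref{tpro41}, two facts from linear algebra: the squares $\lambda_j^2(\tau)$ are the eigenvalues of $\Sigma^2(\tau)$, and the sum of the eigenvalues equals the trace, so $S(\tau)=\mathrm{trace}(\Sigma^2(\tau))$. It then remains to identify the common diagonal entry of $\Sigma^2(\tau)$, which equals $\sum_{j=0}^{N-\tau-1}\sigma_\tau^2(j)$, where $\sigma_\tau$ is the autocovariance function of $\mathbb{Y}$. Expanding $\E\big[(B_H(j+\tau)-B_H(j))(B_H(k+\tau)-B_H(k))\big]$ into its four terms and applying \eqref{cov}, the pure $|\cdot|^{2H}$ contributions cancel and one is left, with $d=|j-k|$, with
$$\sigma_\tau(d)=D\big[(d+\tau)^{2H}-2\,d^{2H}+|d-\tau|^{2H}\big].$$

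Putting these together, $S(\tau)=\mathrm{trace}(\Sigma^2(\tau))=(N-\tau)\sum_{j=0}^{N-\tau-1}\sigma_\tau^2(j)=(N-\tau)D^2\,\alpha(\tau,H,N)$, with $\alpha(\tau,H,N)$ as in \eqref{alfa}. Inserting $S(\tau)$ into \eqref{UpperBoundLD}, the factor $(N-\tau)$ in the numerator of the argument of $\mathcal{H}(\cdot)$ cancels the $(N-\tau)$ carried by $S(\tau)$, and one obtains precisely the inequality \eqref{prop55}.

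The computation here is essentially bookkeeping; the one place that requires care is the covariance step, where the six $|\cdot|^{2H}$ terms and the absolute value $|d-\tau|^{2H}$ (which behaves differently for $d<\tau$ and $d\ge\tau$) must be tracked correctly. Unlike the Brownian-motion case of Proposition~\ref{tpro41}, the increment covariance $\sigma_\tau(d)$ is not compactly supported in $d$, so the sum $\alpha(\tau,H,N)$ does not collapse to a closed-form polynomial and genuinely keeps its dependence on $N$; no further simplification is available or needed.
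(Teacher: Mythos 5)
Your proposal is correct and follows essentially the same route as the paper's own proof: reduce to Theorem~\ref{thm:2}, compute $\sum_j\lambda_j^2(\tau)=\mathrm{trace}(\Sigma^2(\tau))=(N-\tau)\sum_i\sigma_\tau^2(i)=(N-\tau)D^2\alpha(\tau,H,N)$ using the FBM increment autocovariance, and substitute. Your explicit verification of $\sigma_\tau(d)=D[(d+\tau)^{2H}-2d^{2H}+|d-\tau|^{2H}]$ from \eqref{cov} is a detail the paper simply asserts, but it is the same argument.
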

\begin{proof}
\normalsize
The proof is analogous to that of Proposition \ref{tpro41}. It is enough to find the formula for sum of squared eigenvalues $\sum\limits_{j=1}^{N-\tau}\lambda^2_j(\tau),$ where  $\lambda^2_j(\tau)$ are the eigenvalues of the matrix $\Sigma^2(\tau)$, where $\Sigma(\tau)$ is a covariance matrix of the FBM increments. 
Therefore, we have:
$$trace(\Sigma^2(\tau))=\sum\limits_{j=1}^{N-\tau}\lambda^2_j(\tau).$$ The matrix $\Sigma^2(\tau)$ has  main diagonal with the same element $\sum_{i=0}^{N-\tau-1}\sigma_{\tau}^2(i).$ Hence, finally for FBM with: 
\begin{eqnarray}\label{sigma_mala_fbm}
\sigma_{\tau}(i)=D\left[(i+\tau)^{2H}-2i^{2H}+|i-\tau|^{2H}\right]\end{eqnarray} we get:
\begin{eqnarray}\label{eqr1}\sum\limits_{j=1}^{N-\tau}\lambda^2_j(\tau)=(N-\tau)\sum_{i=0}^{N-\tau-1}\sigma_{\tau}^2(i)=(N-\tau)D^2\sum_{i=0}^{N-\tau-1}\left[(i+\tau)^{2H}-2i^{2H}+|i-\tau|^{2H}\right]^2.\end{eqnarray}
Taking above to the inequality (\ref{UpperBoundLD}) we obtain the result (\ref{prop55}).
\end{proof}
}
\begin{remark}Taking into account Eq. (\ref{sigma_mala_fbm}) we can obtain the inequalities in \eqref{eq1} for FBM in the case $N-\tau$ is odd:
\small
\begin{equation}\label{eq4}
\sum_{j=0}^{N-\tau-1}\!\!\!D\left[(j+\tau)^{2H}-2j^{2H}+|j-\tau|^{2H}\right]\leq\max_{j}{\lambda_j(\tau)}\leq\sigma_{\tau}(0)+2\!\!\!\sum_{j=1}^{(N-\tau-1)/2}\!\!\!D\left[(j+\tau)^{2H}-2j^{2H}+|j-\tau|^{2H}\right].
\end{equation}
\normalsize
In the case $N-\tau$ is even the inequalities \eqref{eq2} have the form:
\small
\begin{equation}\label{eq5}
\sum_{j=0}^{N-\tau-1}\!\!\!D\left[(j+\tau)^{2H}-2j^{2H}+|j-\tau|^{2H}\right]\leq\max_{j}{\lambda_j(\tau)}\leq\sigma_{\tau}(0)+2\!\!\!\sum_{j=1}^{(N-\tau)/2-1}\!\!\!D\left[(j+\tau)^{2H}-2j^{2H}+|j-\tau|^{2H}\right].
\end{equation}
\end{remark}
\begin{remark}We can simplify results \eqref{eq4} and \eqref{eq5} for FBM in the case $\tau=1$. Based on the formula:
$$\sum_{j=1}^n\left[(j+1)^{2H}-2j^{2H}+(j-1)^{2H}\right]=(n+1)^{2H}-n^{2H}-1,$$
we can obtain the simplified version of \eqref{eq4}, namely for FBM in the case $N$ is even and $\tau=1$ we have:
\begin{equation}\nonumber
D\left[(N-1)^{2H}-(N-2)^{2H}+1\right]\leq max_{j}{\lambda_j(\tau)}\leq2D\left[\left(\frac{N}{2}\right)^{2H}-\left(\frac{N-2}{2}\right)^{2H}\right].
\end{equation}
In the case $N$ is odd and $\tau=1$ for FBM we obtain (the simplified version of \eqref{eq5}):
\begin{equation}\nonumber
D\left[(N-1)^{2H}-(N-2)^{2H}+1\right]\leq max_{j}{\lambda_j(\tau)}\leq2D\left[\left(\frac{N-1}{2}\right)^{2H}-\left(\frac{N-3}{2}\right)^{2H}\right].
\end{equation}

\end{remark}
\section{Large deviation principle for estimator of  anomalous diffusion exponent}

\label{sec:5}
Let us extend the results of the previous sections to derive one-sided  large deviation principle of the estimator for anomalous diffusion exponent of FBM. To simplify the presentation of this section without  loss of generality we assume $D=1/2$, see Eq. (\ref{cov}). The generalization is straightforward. Due to the stationarity of the increments, for the long enough trajectory of FBM we have the following asymptotic equality: 
\begin{equation*}
M_N(\tau) \simeq  \mathbb{E}\left( M_N(\tau) \right) =\tau^\beta.
\end{equation*}
from which we can immediately deduce that the  estimator of the parameter $\beta$  has the following distribution:
\begin{equation*}
\hat{\beta} \,{\buildrel d \over =}\, \ln\left( M_N(\tau) \right)/\ln\left(\tau \right).
\end{equation*}

\begin{proposition}
	Let us consider zero-mean FBM whose autocovariance is given by Eq. (\ref{cov}) with $D=1/2$. For a large sample length $N$ the anomalous diffusion exponent estimator $\hat{\beta}$ satisfies the following inequality:
	\begin{align}\label{res:thm3}
	P\left(  \hat{\beta}- \mathbb{E}\left( \hat{\beta} \right)  > \varepsilon \right)\leq & \exp\left(- \frac{(N-\tau)\alpha(\tau,H,N)}{2\bar{\lambda}(\tau)^2}   \mathcal{H}\left( \frac{2\bar{\lambda}(\tau) \left(\tau^{\varepsilon +\beta} - \tau^\beta \right)   }{\alpha(\tau,H,N)}\right) \right),
	\end{align}
where $\bar{\lambda}(\tau) = 2 max_{j}\left\{  \lambda_j(\tau) \right\}$, $\mathcal{H}(\cdot)$ is defined in (\ref{H}), and $\alpha(\tau,H,N)$ is given in (\ref{alfa}).
	
\end{proposition}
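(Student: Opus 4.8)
The plan is to reduce the one-sided tail event for $\hat\beta$ to a one-sided tail event for the TAMSD $M_N(\tau)$ and then invoke the FBM estimate from Proposition \ref{tpropo5} in its one-sided form. First I would record the identities that the setup of this section makes available: $\hat\beta \,{\buildrel d \over =}\, \ln(M_N(\tau))/\ln(\tau)$, $\E(M_N(\tau)) = \tau^\beta$, and (asymptotically, for large $N$) $\E(\hat\beta) = \beta$. Since the lag time satisfies $\tau>1$, we have $\ln\tau>0$, so both $x\mapsto x\ln\tau$ and $x\mapsto\tau^{x}$ are strictly increasing; chaining these monotone maps gives the event identity
\begin{equation*}
\{\hat\beta - \E(\hat\beta) > \varepsilon\}=\{\ln M_N(\tau) > (\beta+\varepsilon)\ln\tau\}=\{M_N(\tau) > \tau^{\beta+\varepsilon}\}=\{M_N(\tau) - \E(M_N(\tau)) > \tau^{\varepsilon+\beta}-\tau^{\beta}\}.
\end{equation*}

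Next I would apply the right-tail bound. By Proposition \ref{prop:1}, $(N-\tau)M_N(\tau)-\E\bigl((N-\tau)M_N(\tau)\bigr)$ is sub-gamma on the right tail with variance factor $\nu = 2\sum_{j=1}^{N-\tau}\lambda_j^2(\tau)$ and scale factor $c=\bar\lambda(\tau)$, so Chernoff's inequality \eqref{f33} — the one-sided version, which does not carry the factor $2$ of the two-sided statement \eqref{UpperBoundLD} — yields, for every $\delta>0$,
\begin{equation*}
P\bigl(M_N(\tau) - \E(M_N(\tau)) > \delta\bigr)\leq \exp\left(-\frac{2\sum_{j=1}^{N-\tau}\lambda_j^2(\tau)}{\bar\lambda(\tau)^2}\,\mathcal{H}\!\left(\frac{\bar\lambda(\tau)(N-\tau)\delta}{2\sum_{j=1}^{N-\tau}\lambda_j^2(\tau)}\right)\right),
\end{equation*}
with $\mathcal{H}$ as in \eqref{H}. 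For FBM with $D=1/2$, equation \eqref{eqr1} gives $\sum_{j=1}^{N-\tau}\lambda_j^2(\tau)=D^2(N-\tau)\alpha(\tau,H,N)=\tfrac14(N-\tau)\alpha(\tau,H,N)$ with $\alpha$ defined in \eqref{alfa}. Substituting this expression and then the value $\delta = \tau^{\varepsilon+\beta}-\tau^{\beta}$ coming from the event identity above, the prefactor collapses to $(N-\tau)\alpha(\tau,H,N)/\bigl(2\bar\lambda(\tau)^2\bigr)$ and the argument of $\mathcal{H}$ collapses to $2\bar\lambda(\tau)\bigl(\tau^{\varepsilon+\beta}-\tau^{\beta}\bigr)/\alpha(\tau,H,N)$, which is exactly the bound \eqref{res:thm3}.

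The arithmetic steps — specializing $D=1/2$ and simplifying the constants — are routine. The one point that requires care is the very first reduction: it uses the section's standing asymptotic identifications $M_N(\tau)\simeq\E(M_N(\tau))=\tau^\beta$ and $\E(\hat\beta)=\beta$ as exact equalities, together with the positivity of $\ln\tau$, so that the passage from the event on $\hat\beta$ to the event on $M_N(\tau)$ is a genuine equivalence (and in particular does not turn into an inclusion in the wrong direction, which would break the upper bound). This is precisely where the hypothesis of a large sample length $N$ is being invoked; everything after it is bookkeeping built on Propositions \ref{prop:1} and \ref{tpropo5}.
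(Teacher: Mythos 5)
Your proposal is correct and follows essentially the same route as the paper: reduce the tail event for $\hat{\beta}$ to the event $\{M_N(\tau)>\tau^{\varepsilon+\beta}\}$ using $\E(M_N(\tau))=\tau^{\beta}$ and the asymptotic unbiasedness of $\hat{\beta}$, apply the sub-gamma right-tail bound from Proposition \ref{prop:1}, and substitute $\sum_{j}\lambda_j^2(\tau)=(N-\tau)\alpha(\tau,H,N)/4$ from Eq. (\ref{eqr1}) with $D=1/2$. Your explicit appeal to the one-sided Chernoff inequality (rather than the two-sided Theorem \ref{thm:2}, which the paper cites) is in fact the cleaner justification for the absence of the factor $2$ in the final bound, and your remarks on the positivity of $\ln\tau$ make precise a step the paper leaves implicit.
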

\begin{proof}
	
	In \cite{bib:Sikora2017} it was proved the $\hat{\beta}$ is asymptotically unbiased, that is 
	\begin{eqnarray}
	\E(\hat{\beta}) \xrightarrow[]{N\rightarrow\infty}\beta .
	\end{eqnarray}
	Then in order to determine the one-sided {large deviation}  for $\hat{\beta}$ one has to establish:
	\begin{align*}
	P&\left(\ln\left( \frac{M_N(\tau)}{\tau^\beta} \right)> \varepsilon\ln\left( \tau \right) \right)=
	P\left( \ln\left( \frac{M_N(\tau)}{\tau^\beta} \right)  > \varepsilon\ln\left( \tau \right) \right)
		=P\left( M_N(\tau)  > \tau^{\varepsilon +\beta} \right).
		\end{align*}
Using Theorem \ref{thm:2} we get:
\begin{eqnarray}\label{eq58}
P\left( M_N(\tau)  > \tau^{\varepsilon +\beta} \right) &=
P\left( (N-\tau)\left( M_N(\tau)-\mathbb{E}\left( M_N(\tau)\right) \right)  > (N-\tau)\left(\tau^{\varepsilon +\beta} - \mathbb{E}\left( M_N(\tau)\right) \right) \right)\nonumber\\
&\leq \exp\left(- \frac{2\sum\limits_{j=1}^{N-\tau}\lambda^2_j(\tau)}{\bar{\lambda}(\tau)^2}   \mathcal{H}\left( \frac{\bar{\lambda}(\tau) \left(\tau^{\varepsilon +\beta} - \tau^\beta \right) (N-\tau)   }{2\sum\limits_{j=1}^{N-\tau}\lambda^2_j(\tau)}\right) \right).
\end{eqnarray}
Taking into account Eq. (\ref{eqr1}) and inequality (\ref{eq58}) we obtain inequality (\ref{res:thm3}), which is the second main result of the present paper.
\end{proof}

\section*{Acknowledgements}
GS would like to acknowledge a support of NCN Maestro Grant No.2012/06/A/ST1/00258. \\JG and AW would like to acknowledge a support of National Center of Science Opus Grant No. 2016/21/B/ST1/00929 "Anomalous diffusion processes and their applications in real data modelling".\\
AVC acknowledges funding from the Deutsche Forschungsgemeinschaft, project ME 1535/6-1.
\noindent

\end{document}